\newtheorem{theorem}{Theorem}[section]
\newtheorem{lemma}[theorem]{Lemma}
\newtheorem{proposition}[theorem]{Proposition}
\newtheorem{sublemma}{}[theorem]
\theoremstyle{definition}
\theoremstyle{remark}
\numberwithin{equation}{section}
\newcommand\Ga{\mathcal{G}^\mathrm{epex}}
\begin{document}

\title[Edge-apex graphs]{Edge-apexing in hereditary classes of graphs}

%\author{James Oxley}
%\address{Mathematics Department\\
%Louisiana State University\\
%Baton Rouge, Louisiana}
%\email{oxley@math.lsu.edu}

\author[Singh]{Jagdeep Singh}
\address{Department of Mathematics and Statistics\\
Binghamton University\\
Binghamton, New York}
\email{singhjagdeep070@gmail.com}

\author[Sivaraman]{Vaidy Sivaraman}
\address{Department of Mathematics and Statistics\\
Mississippi State University\\
Mississippi}
\email{vaidysivaraman@gmail.com}

%\author[Zaslavsky]{Thomas Zaslavsky}
%\address{Department of Mathematics and Statistics\\
%Binghamton University\\
%Binghamton, New York}
%\email{zaslav@math.binghamton.edu}

\

\keywords{Cograph, Edge-apex graph, Forbidden induced subgraph.}

\subjclass[2010]{05C75}
\date{\today}

\begin{abstract}
A class $\mathcal{G}$ of graphs is called hereditary if it is closed under taking induced subgraphs. We denote by $\Ga$ the class of graphs that are at most one edge away from being in $\mathcal{G}$. We note that $\Ga$ is hereditary and prove that if a hereditary class $\mathcal{G}$ has finitely many forbidden induced subgraphs, then so does $\Ga$.

The hereditary class of cographs consists of all graphs $G$ that can be generated from $K_1$ using complementation and disjoint union. Cographs are precisely the graphs that do not have the $4$-vertex path as an induced subgraph. For the class of edge-apex cographs our main result bounds the order of such forbidden induced subgraphs by 8 and finds all of them by computer search.
\end{abstract}

\maketitle

\section{Introduction}
\label{intro}
We consider finite, simple, undirected graphs. An {\bf induced subgraph} of a graph $G$ is a graph $H$ that can be obtained from $G$ by a sequence of vertex deletions; $G-v$ is the graph obtained from $G$ by deleting a vertex $v$ of $G$. We write $G[T]$ for the subgraph if $T$ is the set of remaining vertices. We say that $G$ {\it contains} $H$ when $H$ is an induced subgraph of $G$. A class $\mathcal{G}$ of graphs is called {\bf hereditary} if it is closed under taking induced subgraphs.  
For a hereditary class $\mathcal{G}$, we call a graph $H$ a {\bf forbidden induced subgraph} for $\mathcal{G}$ if $H$ is not in $\mathcal{G}$ but every proper induced subgraph of $H$ is in $\mathcal{G}$. Thus, a graph $G$ is in $\mathcal{G}$ if and only if $G$ contains no forbidden induced subgraph for $\mathcal{G}$. 

For an edge $e$ of $G$, the graph $G-e$ is the graph obtained from $G$ by deleting $e$.
The {\bf edge-apex class, $\Ga$,} for a class $\mathcal{G}$ is the class of graphs $G$ such that $G$ is in $\mathcal{G}$, or $G$ has an edge $e$ such that $G-e$ is in $\mathcal{G}$. It is easy to see that if $\mathcal{G}$ is hereditary, then so is $\Ga$. We prove that if $\mathcal{G}$ has a finite set of forbidden induced subgraphs, then so does $\Ga$, and we apply this to the class of cographs.

A {\bf cograph} is a graph that can be generated from the single-vertex graph $K_1$ using the operations of complementation and disjoint union. {A graph $G$ is an {\bf edge-apex cograph} if $G$ is a cograph, or $G$ has an edge $e$ such that $G-e$ is a cograph.}  {In Theorem \ref{cograph_extension} we characterize edge-apex cographs by their forbidden induced subgraphs.} Cographs are also called $P_4$-free graphs due to the following characterization. 

\begin{theorem}[\cite{corneil}]
\label{cographs_characterisation}
A graph $G$ is a cograph if and only if $G$ does not contain the path $P_4$ on four vertices as an induced subgraph.
\end{theorem}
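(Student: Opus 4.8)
The plan is to prove both implications by induction, with the heart of the argument being a structural dichotomy for $P_4$-free graphs. Throughout I will use two elementary facts about $P_4$: it is connected, and it is self-complementary, i.e. $\overline{P_4} \cong P_4$. The self-complementarity shows at once that the class of $P_4$-free graphs is closed under complementation, since an induced $P_4$ in $\overline{G}$ on a vertex set $S$ would yield an induced $P_4$ in $G$ on the same set $S$.

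For the forward direction, that every cograph is $P_4$-free, I would induct on the number of construction steps. The base case $K_1$ is immediate. If $G = G_1 \cup G_2$ is a disjoint union of two $P_4$-free graphs, then any induced $P_4$, being connected, would lie entirely within $G_1$ or within $G_2$, contradicting the inductive hypothesis; hence $G$ is $P_4$-free. If $G = \overline{H}$ with $H$ a $P_4$-free cograph, then $G$ is $P_4$-free by the self-complementarity remark above. Thus every graph built from $K_1$ by disjoint union and complementation is $P_4$-free.

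The reverse direction is the substantive one, and I would reduce it to the following dichotomy (essentially Seinsche's lemma): if $G$ is $P_4$-free with at least two vertices, then $G$ is disconnected or $\overline{G}$ is disconnected. Granting this, I would induct on $|V(G)|$ to show every $P_4$-free graph is a cograph. A single vertex is $K_1$. For larger $G$, apply the dichotomy. If $G$ is disconnected, each component is $P_4$-free and smaller, hence a cograph by induction, and $G$ is their disjoint union, so $G$ is a cograph. If instead $\overline{G}$ is disconnected, then $\overline{G}$ is $P_4$-free (self-complementarity) and, by the disconnected case just handled, a cograph; therefore $G = \overline{\overline{G}}$ is a cograph as well, since the cograph class is closed under complementation.

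It remains to prove the dichotomy, which I expect to be the main obstacle. I would argue by induction on $n = |V(G)|$, the cases $n \le 3$ being a direct check. For the inductive step, pick a vertex $v$; since replacing $G$ by $\overline{G}$ preserves both the hypothesis and the desired conclusion, I may assume $G - v$ is disconnected, with components $C_1, \dots, C_k$ where $k \ge 2$. The key observation is that if $v$ has both a neighbour and a non-neighbour in some $C_i$, then, walking along a path in the connected graph $C_i$, one finds an edge $ab$ with $v \sim a$ and $v \not\sim b$; for any vertex $c$ in another component, $c \sim v$ would make $\{c, v, a, b\}$ induce a $P_4$, so $v$ must be anticomplete to everything outside $C_i$, and then $\{v\} \cup C_i$ has no edges to the rest of $G$, so $G$ is disconnected. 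In the remaining case $v$ is complete or anticomplete to each $C_i$; if some component is anticomplete to $v$ then $G$ is disconnected, while if $v$ is complete to all of them then $v$ is adjacent to every other vertex and hence isolated in $\overline{G}$, so $\overline{G}$ is disconnected. Organizing this case analysis cleanly, and confirming that the required adjacent pair $a \sim b$ always exists, is the delicate part of the proof.
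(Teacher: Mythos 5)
Your proof is correct, but note that there is no in-paper argument to compare it against: the paper quotes Theorem \ref{cographs_characterisation} from Corneil, Lerchs and Stewart Burlingham \cite{corneil} without proof, so what you have produced is a self-contained proof of an imported result. Your route is the classical one. The easy direction correctly exploits the two elementary facts that $P_4$ is connected (so disjoint union preserves $P_4$-freeness) and self-complementary (so complementation does too, since $\overline{G}[S]=\overline{G[S]}$). The hard direction correctly reduces to Seinsche's dichotomy --- a $P_4$-free graph on at least two vertices is disconnected or has disconnected complement --- and your proof of the dichotomy is sound: the complementation symmetry legitimately lets you assume $G-v$ is disconnected, since both the hypothesis and the conclusion are invariant under complementation, and for $n\ge 4$ the graph $G-v$ has at least three vertices so the inductive hypothesis applies; the extraction of an edge $ab$ with $v\sim a$ and $v\not\sim b$ from a shortest path between a neighbour and a non-neighbour inside a connected component is the standard ``last neighbour'' argument; and the path $c\,v\,a\,b$ is genuinely induced because $c$ lies in a different component of $G-v$ from $a$ and $b$, so $ca$ and $cb$ are non-edges. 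Two small points to make explicit in a polished write-up: in the reverse induction the disjoint-union operation of the cograph definition is binary, so a graph with $m\ge 3$ components is assembled by iterating it; and in the complement case you apply the inductive hypothesis to the components of $\overline{G}$, which have strictly fewer vertices, so the induction is well-founded even though $\overline{G}$ itself has the same order as $G$. It is also worth observing that the dichotomy you prove is precisely the structural fact the paper invokes silently later, for instance in the proof of Proposition \ref{10_vtx_cographs}, where ``$G-a$ is a cograph so $G-a$ or $\overline{G}-a$ is disconnected'' is used without comment; your proposal thus supplies the justification for that step as well.
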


For a graph $G$, the graph $\overline{G}$ denotes the complement of $G$.  Note that $G$ is a cograph if and only if $\overline{G}$ is. We prove the following.

\begin{theorem}
\label{new1}
Let $\mathcal{G}$ be a hereditary class of graphs such that $\mathcal{G}$ has a finite number of forbidden induced subgraphs. Then the edge-apex class of $\mathcal{G}$ is hereditary and has a finite number of forbidden induced subgraphs. 
\end{theorem}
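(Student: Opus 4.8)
The plan is to establish the two assertions separately, treating heredity first as a warm-up and then concentrating on the finiteness bound, which is the real content. For heredity, I would take $G \in \Ga$ and an induced subgraph $H = G[T]$, and split on the witnessing edge $e = uv$ for $G$. If $G \in \mathcal{G}$ then $H \in \mathcal{G} \subseteq \Ga$ by heredity of $\mathcal{G}$; otherwise $G - e \in \mathcal{G}$, and I distinguish whether both endpoints of $e$ lie in $T$ (so $H - e = (G-e)[T] \in \mathcal{G}$, whence $H \in \Ga$) or at most one does (so $e$ is not an edge of $H$ and $H = (G-e)[T] \in \mathcal{G}$). Either way $H \in \Ga$, so $\Ga$ is hereditary. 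This part is routine.

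For the finiteness statement, let $\mathcal{F}$ be the finite set of forbidden induced subgraphs of $\mathcal{G}$ and let $N = \max_{F \in \mathcal{F}} |V(F)|$. Reformulating membership, a graph $H$ fails to lie in $\Ga$ precisely when $H \notin \mathcal{G}$ \emph{and} $H - e \notin \mathcal{G}$ for every edge $e$; equivalently, $H$ and every one-edge-deletion of $H$ each contain a member of $\mathcal{F}$ as an induced subgraph. Let $H$ be a forbidden induced subgraph of $\Ga$, so $H \notin \Ga$ while $H - v \in \Ga$ for every vertex $v$. My goal is to produce a subset $S \subseteq V(H)$ of size bounded in terms of $N$ with $H[S] \notin \Ga$; minimality of $H$ then forces $S = V(H)$, bounding $|V(H)|$ and hence (finitely many graphs on boundedly many vertices) the whole collection.

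The key observation that tames the potentially unbounded number of edges is this: fix a single copy $F_0 \in \mathcal{F}$ inside $H$ on a vertex set $A_0$ with $|A_0| \le N$. If an edge $e$ of $H$ is \emph{not} an edge of $H[A_0]$, then deleting $e$ leaves the induced subgraph on $A_0$ unchanged, so $F_0$ survives in $H - e$ and $H - e \notin \mathcal{G}$ automatically. Thus the only edges whose deletion could conceivably land in $\mathcal{G}$ are the at most $\binom{N}{2}$ edges of $H[A_0]$, and these are exactly the edges needing a separate certificate. Since $H \notin \Ga$, for each such edge $e \in E(H[A_0])$ the graph $H - e$ contains some $F_e \in \mathcal{F}$ on a vertex set $A_e$ with $|A_e| \le N$. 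I would then set
\[
S = A_0 \cup \bigcup_{e \in E(H[A_0])} A_e, \qquad |S| \le N + \binom{N}{2}\, N,
\]
and verify $H[S] \notin \Ga$ directly: $H[S] \supseteq F_0$ gives $H[S] \notin \mathcal{G}$, while for any edge $f$ of $H[S]$, either $f \notin E(H[A_0])$ (so $F_0$ still sits inside $H[S] - f$) or $f \in E(H[A_0])$ (so the copy $F_e$ on $A_e \subseteq S$ witnesses $H[S] - f \notin \mathcal{G}$, using $(H[S]-f)[A_e] = (H-f)[A_e]$). Hence $H[S] \notin \Ga$, minimality yields $|V(H)| \le N + \binom{N}{2}N$, and the theorem follows. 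The main obstacle is precisely the a priori unboundedness of the edge set, and the crux of the argument is the confinement of relevant deletions to the edges of one fixed forbidden subgraph; once that is in hand, the witness-set construction and the minimality step are mechanical.
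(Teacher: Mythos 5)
Your proposal is correct and is essentially the paper's own argument (Proposition \ref{unique_FIS}): fix one copy $F_0$ of a forbidden induced subgraph of $\mathcal{G}$ inside the minimal graph, observe that only the at most $\binom{N}{2}$ edges of $F_0$ need deletion certificates, take the union of the certificate vertex sets, and use minimality to force that union to be the entire vertex set. The only differences are inessential: the paper runs the minimality step as a contradiction (a vertex outside the union could be deleted, leaving a graph not in $\Ga$) and sharpens the count by showing each certificate $F_h$ contains both endpoints of its edge, yielding $\max\{2c,\, c+k(c-2)\}$ in place of your $N + \binom{N}{2}N$ --- both bounds give finiteness.
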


Its proof is in Section 2. In Section 3, we show that the number of vertices of a forbidden induced subgraph for the class of edge-apex cographs is at most eight. Following is the precise statement.

\begin{theorem}
\label{cograph_extension}
Let $G$ be a forbidden induced subgraph for the class of edge-apex cographs. Then $5 \leq |V(G)| \leq 8$, and $G$ is isomorphic to one of the graphs in Figures \ref{cograph_5}--\ref{cograph_8_3}.
\end{theorem}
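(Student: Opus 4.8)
The plan is to reduce everything to the behaviour of induced $P_4$'s, split into two cases according to whether $G$ contains two edge\nobreakdash-disjoint induced $P_4$'s, and finish by a computer search over the bounded range. Throughout I use the reformulation coming from Theorem \ref{cographs_characterisation}: a graph $G$ is \emph{not} an edge-apex cograph if and only if $G$ contains an induced $P_4$ and, for \emph{every} edge $e$ of $G$, the graph $G-e$ also contains an induced $P_4$. For the lower bound I would check directly that every graph on at most four vertices is an edge-apex cograph: on at most three vertices every graph is a cograph, and on four vertices the only non-cograph is $P_4$ itself, which becomes the cograph $2K_2$ after deletion of its central edge. Hence any forbidden induced subgraph $G$ has $|V(G)|\ge 5$.

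The clean half of the upper bound is the following observation, which I would isolate as a lemma: if $P$ and $Q$ are induced $P_4$'s of $G$ with $E(P)\cap E(Q)=\emptyset$, then $G$ is not an edge-apex cograph. Indeed, if $e$ is an edge of $G$ both of whose endpoints lie in $V(P)$ then, since $G[V(P)]=P_4$, the edge $e$ must be one of the three path-edges of $P$; so if $e\notin E(P)$ then $P$ survives as an induced $P_4$ in $G-e$, and symmetrically for $Q$. As $E(P)\cap E(Q)=\emptyset$, every edge $e$ lies outside $E(P)$ or outside $E(Q)$, so at least one of $P,Q$ remains in $G-e$. Applying the same argument inside $G[V(P)\cup V(Q)]$ shows that this induced subgraph, on at most $8$ vertices, is already not an edge-apex cograph. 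Thus a \emph{minimal} forbidden induced subgraph $G$ that contains two edge-disjoint induced $P_4$'s satisfies $|V(G)|\le 8$.

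It remains to treat the case in which every two induced $P_4$'s of $G$ share an edge, and I expect this to be the main obstacle. Here I would fix one induced $P_4$, say $P=v_1v_2v_3v_4$, and, for each of its three edges $e$, examine an induced $P_4$ in $G-e$ (one exists since $G$ is an obstruction). Such a $P_4$ is either an old one of $G$ that meets $P$ in an edge, or a \emph{new} one created by the deletion of $e$; and a new $P_4$ can only come from a four-vertex set containing both ends of $e$ that induces a $C_4$ or a paw in $G$, since these are exactly the graphs that become $P_4$ after deleting one edge. The goal is to show that in every situation these witnessing four-vertex sets can be chosen to add only a bounded number of vertices to $V(P)$, so that their union with $V(P)$ spans at most eight vertices and still induces a graph in which no single edge deletion destroys all induced $P_4$'s. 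The delicate point is to rule out the configurations that would force a ninth vertex, which reduces to a finite check on the ways the $C_4$/paw ``creators'' can attach to $P$; this analysis also explains the sporadic small obstructions such as $C_5$, whose induced $P_4$'s pairwise overlap yet have no common edge.

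Having established $5\le |V(G)|\le 8$, the final step is a computer search: for each $n$ with $5\le n\le 8$ one tests every graph on $n$ vertices, retains those that are not edge-apex cographs, and discards any that properly contains a smaller such graph. This produces exactly the graphs displayed in Figures \ref{cograph_5}--\ref{cograph_8_3}.
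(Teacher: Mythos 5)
Your lower bound is fine, your edge-disjoint case is correct (it is essentially the paper's Lemma~\ref{lemmaA} together with the first paragraph of Proposition~\ref{unique_FIS}: two edge-disjoint induced $P_4$'s $P,Q$ force $V(G)=V(P)\cup V(Q)$ by minimality, hence $|V(G)|\le 8$), and your concluding search matches the paper's algorithm. But the case you defer --- every two induced $P_4$'s of $G$ sharing an edge --- is the entire substance of the theorem, and there your proposal stops at a statement of intent: ``the goal is to show'' that the witnessing four-vertex sets add boundedly many vertices, and this ``reduces to a finite check.'' It does not, at least not in the local form you describe. Your $C_4$/paw observation only tells you that each witness $F_e$ for $e\in E(P)$ contains both ends of $e$; the other two vertices of each witness need not attach to $P$ at all, so the naive count gives $|V(G)|\le 4+3\cdot 2=10$ --- precisely the paper's general Proposition~\ref{unique_FIS} with $c=4$, $k=3$ --- and closing the gap from $10$ to $8$ is where all the work lies. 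Ruling out a $9$-vertex obstruction is not a finite check on how creators attach to one fixed $P_4$: it requires global information about $G$, and in the paper it consumes most of Section~3.

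For comparison, the paper's route is: (i) Proposition~\ref{10_vtx_cographs} shows no obstruction has a unique induced $P_4$, using connectivity of $G$ and $\overline{G}$, leaf arguments, and Lemma~\ref{help} (if every $P_4$ uses an edge $ab$ lying in no cycle, then $G-ab$ is a cograph); (ii) one then chooses two induced $P_4$'s $G[S]$, $G[T]$ with $q=|S\cap T|$ \emph{minimum} and applies the refined bound of Proposition~\ref{main_extension}, $|V(G)|\le 4+(4-q)+2k$ with $k=|E(G[S])\cap E(G[T])|$, so the only surviving bad case is $q=3$, $k=2$, i.e.\ two $P_4$'s $abcd$, $abce$ and a hypothetical ninth vertex; (iii) eliminating that case uses the minimality of $q$ to force every induced $P_4$ of $G$ to contain both edges $ab$ and $bc$, and then a dichotomy on whether $G-b$ or $\overline{G}-b$ is disconnected (one must be, since $G-b$ is a cograph), each branch ending in a contradiction via Lemma~\ref{help} or a $P_4$ in the complement. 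Your framework contains no analogue of the minimal-overlap device or of the complement arguments, and without something playing their role the central case of the theorem remains unproved. So: correct skeleton for the easy half and the endgame, but a genuine gap at the main case.
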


The $5$-vertex such graphs are $C_5$, the $5$-cycle, and $C_5 + e$, the $5$-cycle plus a chord. The first graph in the second row of Figure \ref{cograph_6}, $K_3$ plus three leaves, is denoted by $K_3 \circ K_2$. We provide a readable proof that the forbidden induced subgraphs for the class of edge-apex cographs containing two vertex-disjoint $P_4$'s are precisely the graphs shown in Figure \ref{cograph_8_3}. Finding other such graphs led to a messy case analysis and the authors found it reasonable to list them using Sagemath.

\section{edge-apex exclusions}

We omit the proof of the following straightforward observation.

\begin{lemma}
\label{hereditary_apex}
For a hereditary class of graphs $\mathcal{G}$, the edge-apex class $\Ga$ is hereditary.
\end{lemma}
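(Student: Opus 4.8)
The plan is to reduce to closure under single-vertex deletion: since every induced subgraph arises from a sequence of vertex deletions, it suffices to show that if $G \in \Ga$ and $v \in V(G)$, then $G - v \in \Ga$, and then iterate. So I would fix such a $G$ and $v$ and verify membership of $G - v$ directly from the definition of $\Ga$.

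By definition there are two cases. If $G \in \mathcal{G}$, then heredity of $\mathcal{G}$ gives $G - v \in \mathcal{G} \subseteq \Ga$ at once. Otherwise there is an edge $e = xy$ with $G - e \in \mathcal{G}$, and I would split according to whether $v$ is an endpoint of $e$. If $v \notin \{x, y\}$, then $e$ survives in $G - v$, and one checks the identity $(G - v) - e = (G - e) - v$; since $G - e \in \mathcal{G}$ and $\mathcal{G}$ is hereditary, the right-hand side lies in $\mathcal{G}$, so $e$ itself witnesses $G - v \in \Ga$.

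The only point requiring a moment's care — the ``main obstacle,'' though it is mild — is the case $v \in \{x, y\}$, say $v = x$: here deleting $v$ already removes the witnessing edge $e$, so there is no surviving edge to delete and the previous argument breaks down. The resolution is the observation $G - v = (G - e) - v$, valid precisely because $e$ is incident to $v$ and is therefore removed together with $v$ in any case; the right-hand side is in $\mathcal{G}$ by heredity, so in fact $G - v \in \mathcal{G} \subseteq \Ga$ even more strongly. In every case $G - v \in \Ga$, which completes the induction and shows $\Ga$ is hereditary.
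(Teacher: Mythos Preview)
Your argument is correct; the paper itself omits the proof entirely, calling the lemma a ``straightforward observation,'' so your routine verification via closure under single-vertex deletion is exactly the kind of check the authors leave to the reader.
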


The proof of Theorem \ref{new1} is immediate from Lemma \ref{hereditary_apex} and the following proposition.

\begin{proposition}
\label{unique_FIS}
Let $\mathcal{G}$ be a hereditary class of graphs and let $c$ and $k$ denote the maximum number of vertices and edges in a forbidden induced subgraph for $\mathcal{G}$. If $G$ is a forbidden induced subgraph for $\Ga$, then $|V(G)| \leq$ max $\{2c, c + k(c-2)\}$. 
\end{proposition}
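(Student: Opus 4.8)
The plan is to use the minimality of $G$ in its strongest form. Since $G$ is a forbidden induced subgraph for $\Ga$, we have $G \notin \Ga$ (so $G \notin \mathcal{G}$ and $G - e \notin \mathcal{G}$ for \emph{every} edge $e$), while every proper induced subgraph of $G$ lies in $\Ga$. Hence it suffices to exhibit a set $W \subseteq V(G)$ with $|W| \le \max\{2c,\, c + k(c-2)\}$ and $G[W] \notin \Ga$; minimality then forces $W = V(G)$. To certify $G[W] \notin \Ga$ I must check that $G[W] \notin \mathcal{G}$ and that $G[W] - g \notin \mathcal{G}$ for every edge $g$ of $G[W]$, that is, that each single edge deletion still leaves a forbidden subgraph for $\mathcal{G}$ inside $W$. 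Because $\mathcal{G}$ is hereditary and $G \notin \mathcal{G}$, the graph $G$ contains some forbidden subgraph for $\mathcal{G}$, and any such subgraph has at most $c$ vertices and at most $k$ edges. The argument then splits according to how two forbidden subgraphs of $G$ can overlap.

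First I would treat the case in which $G$ contains two forbidden subgraphs $H_1, H_2$ for $\mathcal{G}$ with $|V(H_1) \cap V(H_2)| \le 1$. Set $W = V(H_1) \cup V(H_2)$, so $|W| \le 2c$. A single edge $g$ cannot be an edge of both $H_1$ and $H_2$, since its two endpoints would then lie in $V(H_1) \cap V(H_2)$, a set of at most one vertex. Thus for every edge $g$ at least one of $H_1, H_2$ survives in $G[W] - g$, giving $G[W] - g \notin \mathcal{G}$, while $G[W] \supseteq H_1$ gives $G[W] \notin \mathcal{G}$. Hence $G[W] \notin \Ga$ and $|V(G)| \le 2c$.

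In the remaining case every two forbidden subgraphs of $G$ share at least two vertices. Fix one such subgraph $H$ with vertex set $S$, where $|S| \le c$ and $G[S] = H$. For each of the at most $k$ edges $f$ of $H$, we have $G - f \notin \mathcal{G}$, so $G - f$ contains a forbidden subgraph $H_f$, and I claim $|V(H_f) \cap S| \ge 2$. Indeed, if $H_f$ contains both endpoints of $f$ then those two vertices lie in $S$; otherwise $f$ is not an edge inside $V(H_f)$, so $G[V(H_f)] = H_f$ is itself a forbidden subgraph of $G$, and the case hypothesis forces $|V(H_f) \cap S| \ge 2$. Either way $H_f$ contributes at most $c - 2$ vertices outside $S$, so $W = S \cup \bigcup_{f \in E(H)} V(H_f)$ satisfies $|W| \le c + k(c-2)$. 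To see $G[W] \notin \Ga$: clearly $G[W] \supseteq H$; and for an edge $g$ of $G[W]$, if $g$ is an edge of $H$ then $g = f$ for some $f$ and $H_f \subseteq G[W] - f$ survives, while if $g$ is not an edge of $H$ then (since $G[S] = H$) $g$ has an endpoint outside $S$, so the copy of $H$ on $S$ is untouched in $G[W] - g$. In every case $G[W] - g \notin \mathcal{G}$, so $G[W] \notin \Ga$ and $|V(G)| \le c + k(c-2)$.

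The main obstacle is pushing the per-edge contribution down to exactly $c - 2$ rather than the easy $c - 1$, and this is precisely what dictates the overlap dichotomy: the delicate point is that a forbidden subgraph $H_f$ of $G - f$ which misses an endpoint of $f$ is automatically a forbidden subgraph of $G$, so that the ``at least two shared vertices'' hypothesis applies to it. A secondary point requiring care is verifying $G[W] \notin \Ga$ against \emph{all} edges of $G[W]$, not merely the edges of $H$; this is resolved by the observation that deleting any edge not belonging to $H$ leaves the copy of $H$ on $S$ intact.
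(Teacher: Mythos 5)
Your proof is correct and takes essentially the same route as the paper's: fix a forbidden induced subgraph $H$ for $\mathcal{G}$, attach to each edge $f$ of $H$ a forbidden subgraph of $G-f$ meeting $V(H)$ in at least two vertices, and invoke the minimality of $G$ to conclude that $V(H)$ together with these sets covers $V(G)$, with a separate $2c$ bound when two forbidden subgraphs of $G$ barely overlap. Your dichotomy by vertex-overlap ($\leq 1$ versus $\geq 2$ shared vertices) instead of the paper's edge-disjointness dichotomy, and your contrapositive use of minimality (showing $G[W] \notin \Ga$ forces $W = V(G)$ rather than deleting a stray vertex), are cosmetic variations; if anything, your handling of the sub-case where $H_f$ omits an endpoint of $f$ yet still shares edges with $H$ is slightly more careful than the paper's corresponding claim.
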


\begin{proof}
First note that if $G$ contains two edge-disjoint forbidden subgraphs $H_1$ and $H_2$ for $\mathcal{G}$, then $V(G) = V(H_1) \cup V(H_2)$. Suppose not and let $x$ be a vertex in $V(G)- (V(H_1) \cup V(H_2))$. Then $G-x$ is not in $\Ga$, a contradiction. It follows that $|V(G)| \leq 2c$.

Since $G$ is not in $\mathcal{G}$, the graph $G$ contains a forbidden induced subgraph $H$ for $\mathcal{G}$. Observe that for every edge $h$ of $H$, the graph $G-h$ is not in $\mathcal{G}$ so we have a subset $F_h$ of $V(G)$ such that $(G-h)[F_h]$ is a forbidden induced subgraph for $\mathcal{G}$. If there is a vertex $x$ in $V(G)- (V(H) \cup \bigcup_{h \in E(H)} F_h )$, then $G-x$ is not in $\Ga$, a contradiction. Therefore, $V(G) = V(H) \cup \bigcup_{h \in E(h)} F_h$.

Observe that $F_h$ contains both the endpoints of the edge $h$, else $F_h$ induces a forbidden induced subgraph for $\mathcal{G}$ in $G$ that is edge-disjoint from $H$ and thus $|V(G)| \leq 2c$. So for every $h$ in $E(H)$, we have $|V(H) \cap F_h| \geq 2$. The result now follows. 
\end{proof}

The bound can be slightly improved if a forbidden induced subgraph for $\Ga$ contains more than one forbidden induced subgraph for $\mathcal{G}$.

\begin{proposition}
\label{main_extension}
Let $\mathcal{G}$ be a hereditary class of graphs and let $G$ be a forbidden induced subgraph for $\Ga$ that has distinct subsets $S$ and $T$  of $V(G)$  such that $G[S]$ and $G[T]$ are forbidden induced subgraphs for $\mathcal{G}$. If $|S \cap T| = q$ and $c$ denotes the maximum number of vertices in a forbidden induced subgraph for $\mathcal{G}$, then $|V(G)| \leq c+(c-q)+k(c-2)$, where $k= |E(G[S]) \cap E(G[T])|.$
\end{proposition}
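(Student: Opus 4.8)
The plan is to adapt the vertex-covering argument from the proof of Proposition \ref{unique_FIS}, the new feature being that a vertex need only be ``protected'' against the common edges of $G[S]$ and $G[T]$, rather than against all edges of a single forbidden subgraph. Write $H_1 = G[S]$ and $H_2 = G[T]$, and recall that since $G$ is a forbidden induced subgraph for $\Ga$ we have $G \notin \mathcal{G}$, that $G - e \notin \mathcal{G}$ for every edge $e$, and that $G - x \in \Ga$ for every vertex $x$. For each edge $e$ of $G$ I would fix a set $F_e \subseteq V(G)$ with $(G-e)[F_e]$ a forbidden induced subgraph for $\mathcal{G}$; such a set exists because $G - e \notin \mathcal{G}$, and $|F_e| \le c$.

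The central step I would establish is the covering identity
\[
V(G) \;=\; (S \cup T) \;\cup\!\! \bigcup_{e \in E(H_1) \cap E(H_2)} \!\! F_e .
\]
To prove it, suppose some vertex $x$ lies outside the right-hand side, and deduce $G - x \notin \Ga$, contradicting $G - x \in \Ga$. First, $x \notin S$ keeps $H_1$ intact in $G - x$, so $G - x \notin \mathcal{G}$. Next, fix any edge $f$ of $G - x$ and split into cases. If $f \notin E(H_1)$, then $H_1 = (G - x - f)[S]$ survives, so $(G-x) - f \notin \mathcal{G}$; symmetrically if $f \notin E(H_2)$. The only remaining edges are the common ones, and for such an $f$ its endpoints lie in $S \cap T$ (hence differ from $x$), while $x \notin F_f$ by the choice of $x$, so $(G-x-f)[F_f] = (G-f)[F_f]$ is still forbidden and $(G-x)-f \notin \mathcal{G}$. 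Hence every single-edge deletion from $G - x$ stays outside $\mathcal{G}$, giving $G - x \notin \Ga$, the desired contradiction. The point to emphasize is that a non-common edge is automatically handled by whichever of $H_1, H_2$ does not contain it, which is exactly why only the $k$ common edges contribute to the union.

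For the count, I would bound $|S \cup T| = |S| + |T| - q \le 2c - q = c + (c - q)$, and then argue that each $F_e$ with $e$ common contributes at most $c - 2$ vertices outside $S \cup T$. This rests on choosing $F_e$ to contain both endpoints of $e$: those endpoints lie in $S \cap T \subseteq S \cup T$, so $|F_e \setminus (S \cup T)| \le |F_e| - 2 \le c - 2$. Summing over the covering identity then yields
\[
|V(G)| \le |S \cup T| + \sum_{e \in E(H_1)\cap E(H_2)} |F_e \setminus (S\cup T)| \le (2c - q) + k(c-2),
\]
which is the asserted bound $c + (c-q) + k(c-2)$.

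The main obstacle I anticipate is precisely the endpoint-containment step, the same delicate point as in Proposition \ref{unique_FIS}. One must show that for each common edge $e$ the set $F_e$ can be taken to meet $S \cup T$ in at least two vertices, most cleanly by having it contain both endpoints of $e$. If this fails, then every forbidden subgraph of $G - e$ omits an endpoint of $e$ and therefore equals $G[F_e]$, a forbidden induced subgraph of $G$ that avoids $e$; one then argues, as in the edge-disjoint case of the previous proof, that such a configuration does not exceed the stated bound. Pinning down this degenerate branch is where I expect the real work to lie, since the covering identity and the cardinality arithmetic are routine once it is in hand.
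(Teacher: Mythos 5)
Your central engine coincides with the paper's own proof: the covering identity $V(G) = S \cup T \cup \bigcup_{i=1}^{k} F_i$ taken over the $k$ common edges $e_1,\ldots,e_k$, followed by the count $|V(G)| \leq (2c-q) + k(c-2)$ once each $F_i$ is known to meet $S$ in at least two vertices. Indeed, you justify the covering identity more explicitly than the paper does (your case split on whether the deleted edge $f$ is common, with non-common edges handled automatically by whichever of $G[S]$, $G[T]$ avoids them, is exactly the right reasoning, left implicit in the paper). You also correctly identify the crux: showing $|F_i \cap (S\cup T)| \geq 2$ for each common edge.

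But you leave that crux open --- you say so yourself --- and the fallback you sketch for the degenerate branch is not the paper's fix and would not close it as described. The paper's resolution is a one-line pivot rather than a separate bound: it first disposes of the edge-disjoint case (there the same deletion argument gives $V(G) = S \cup T$, hence $|V(G)| \leq c + (c-q)$, the bound with $k=0$) and thereafter assumes that \emph{no} two subsets inducing forbidden subgraphs of $G$ are edge-disjoint. Under that standing assumption, if $F_i$ omits an endpoint of $e_i$ then $(G-e_i)[F_i] = G[F_i]$ is itself a forbidden induced subgraph of $G$, hence shares an edge --- so at least two vertices --- with $G[S]$, and $|F_i \cap S| \geq 2$ holds anyway; note the count never needed endpoint containment, only the two-vertex overlap. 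Your proposal instead treats the degenerate configuration by the edge-disjoint covering argument applied to the pair $(S, F_e)$, which yields $V(G) = S \cup F_e$, i.e.\ $|V(G)| \leq 2c - |S \cap F_e|$. That is a bound in the wrong parameters: it need not be $\leq c + (c-q) + k(c-2)$ for the original pair (for instance $q = c-1$, $k = 1$, $S \cap F_e = \emptyset$ gives the estimate $2c$ against the claimed $2c - 1$), so as sketched this branch does not establish the stated inequality. The missing observation is simply that in your failure case either $G[F_e]$ shares an edge with $G[S]$ --- delivering the $\geq 2$ overlap you wanted --- or the pair $(S, F_e)$ is edge-disjoint and belongs to the first case, which must therefore be dispatched up front for all pairs of forbidden-inducing subsets, not merely the given $(S,T)$.
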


\begin{proof}
First we consider the possibility that $G[S]$ and $G[T]$ are edge-disjoint. In this case, we
have $V(G)= S \cup T$. Suppose not and let $x$ be a vertex in $V(G)- (S \cup T)$. Then $G-x$ is not in $\Ga$, a contradiction. It now follows that $|V(G)| \leq c+(c-q)$, verifying the proposition. Therefore, we may assume that for every choice of $S$ and $T$, $G[S]$ and $G[T]$ are not edge-disjoint. 

Let $E(G[S]) \cap E(G[T]) = \{e_1, \ldots, e_k\}$. Since for each $1 \leq i \leq k$, the graph $G-e_i$ is not in $\mathcal{G}$, we have a subset $F_i$ of $V(G)$ such that $(G-e_i)[F_i]$ induces a forbidden induced subgraph for $\mathcal{G}$. Observe that, if there is a vertex $x$ in $V(G)- (S \cup T \cup \bigcup_{i=1}^{k} F_i )$, then $G-x$ is not in $\Ga$, a contradiction. Therefore, $V(G) = S \cup T \cup \bigcup_{i=1}^{k} F_i$.

For $i$ in $\{1, \ldots, k\}$, if $F_i$ does not contain the endpoints of $e_i$, then $G[F_i]$ is a forbidden induced subgraph for $\mathcal{G}$. Since $G[S]$ and $G[F_i]$ are not edge-disjoint, we have $|F_i \cap S| \geq 2$ for each $i$ in $\{1,\ldots,k\}$. It now follows that $|V(G)| \leq c+(c-q)+k(c-2).$ 
\end{proof}

\section{Edge-apex Cograph Exclusions}

\begin{lemma}
\label{connected}
Let $G$ be a forbidden induced subgraph for the class of edge-apex cographs and assume that $G$ or $\overline{G}$ is disconnected. Then $G$ contains two vertex-disjoint $P_4$'s and $|V(G)|=8$.
\end{lemma}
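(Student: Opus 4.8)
The plan is to reduce the whole statement to the single claim that $G$ contains two vertex-disjoint induced $P_4$'s. Indeed, two vertex-disjoint $P_4$'s are in particular edge-disjoint forbidden induced subgraphs for the class of cographs, so the edge-disjoint argument in the proof of Proposition \ref{unique_FIS} applies verbatim: if $H_1$ and $H_2$ are these two $P_4$'s and some vertex $x$ lay outside $V(H_1) \cup V(H_2)$, then $G - x$ would still contain both, and deleting any single edge leaves at least one of them intact, so $G - x \notin \Ga$, contradicting minimality. Hence $V(G) = V(H_1) \cup V(H_2)$, which has exactly $8$ vertices because $H_1$ and $H_2$ are vertex-disjoint. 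Thus $|V(G)| = 8$ comes for free once the two disjoint $P_4$'s are produced, and everything rests on finding them.

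To produce them I would treat the two hypotheses dually, using that an induced $P_4$ is connected and that $\overline{P_4} = P_4$. First suppose $G$ is disconnected, with components $C_1, \ldots, C_m$ where $m \geq 2$. Every induced $P_4$ of $G$, being connected, lies inside a single component, and since $G$ is not a cograph some component, say $C_1$, contains one. The crux is to rule out the possibility that $C_1$ is the only component meeting a $P_4$. If it were, then every other $C_j$ would be a cograph; but $C_1$ is a proper induced subgraph of $G$, so by minimality $C_1 \in \Ga$, and as $C_1$ is not itself a cograph there is an edge $e$ of $C_1$ with $C_1 - e$ a cograph. Then $G - e$ is a disjoint union of cographs, hence a cograph, contradicting $G \notin \Ga$. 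Therefore at least two components contain a $P_4$, and these furnish two vertex-disjoint $P_4$'s.

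For the second hypothesis, $\overline{G}$ disconnected, I would run the complementary argument on the co-components of $G$. Writing the components of $\overline{G}$ as $D_1, \ldots, D_p$ with $p \geq 2$, we have $G = G[V_1] \vee \cdots \vee G[V_p]$, where $V_i = V(D_i)$ and $\vee$ denotes the join. Because $\overline{P_4} = P_4$ is connected, every induced $P_4$ of $G$ again lies inside a single part $V_i$, and some part, say $V_1$, contains one. If $V_1$ were the only such part, then each remaining $G[V_j]$ would be $P_4$-free; using minimality of $G$ together with $G[V_1] \in \Ga$, there is an edge $e$ inside $V_1$ with $G[V_1] - e$ a cograph, and then $G - e$ is a join of cographs (here I use that cographs are closed under join), hence a cograph, again a contradiction. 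So two distinct parts contain $P_4$'s, giving the required vertex-disjoint pair.

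The main obstacle I anticipate is the join case: one must check that deleting a single edge inside one co-component preserves the join structure across the parts, and then invoke closure of cographs under join---equivalently, pass to complements and use closure under disjoint union---to conclude that the resulting graph is genuinely a cograph. The disconnected case is the easier mirror image, and the passage from two vertex-disjoint $P_4$'s to $|V(G)| = 8$ is immediate from the earlier proposition.
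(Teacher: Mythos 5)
Your proposal is correct and takes essentially the same approach as the paper: in each case ($G$ disconnected, $\overline{G}$ disconnected) you show that if only one component, respectively co-component, contained a $P_4$, then minimality would produce an edge $e$ with $G-e$ a cograph, and you then get $|V(G)|=8$ by the same vertex-deletion argument used in Proposition~\ref{unique_FIS}. Your explicit join decomposition of $G$ and the closure of cographs under join simply spell out the step the paper compresses into the one-to-one correspondence between the $P_4$'s of $G$ and of $\overline{G}$.
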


\begin{proof}
First suppose that $G$ is disconnected. Assume that only one of the components, say $H$, of $G$ contains a $P_4$. Since $H$ is a proper induced subgraph of $G$, there is an edge $e$ of $H$ such that $H-e$ is a cograph. It follows that $G-e$ is a cograph, contradicting the minimality of $G$. Therefore more than one component of $G$ contains a $P_4$.

Next suppose that $\overline{G}$ is disconnected. If more than one component of $\overline{G}$ contains a $P_4$, then those $P_4$'s induce $P_4$'s in $G$ as well and the result follows. Suppose not, and let $\overline{H}$ be the only component of $\overline{G}$ that contains a $P_4$. It follows that the complement $H$ of $\overline{H}$ contains a $P_4$. Since the $P_4$'s in $G$ and $\overline{G}$ are in one-to-one correspondence, the $P_4$ in $H$ is the unique $P_4$ of $G$. Therefore there is an edge $e$ of $G$ such that $G-e$ is a cograph, a contradiction. Therefore $\overline{G}$ contains two vertex-disjoint $P_4$'s and so does $G$. 

Say $P$ and $P'$ are two such $P_4$'s in $G$. It follows that $V(G) = V(P) \cup V(P')$ and $|V(G)|=8$. 
\end{proof}

We omit the straightforward proof of the following result, which will be a useful tool in the proof of Proposition \ref{10_vtx_cographs} and Theorem \ref{cograph_extension}.

\begin{lemma}
\label{help}
For a graph $G$ and an edge $ab$ of $G$, if every $P_4$ of $G$ contains the edge $ab$ and $ab$ is contained in no cycle, then $G-ab$ is a cograph. 
\end{lemma}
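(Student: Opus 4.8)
The plan is to argue by contradiction, exploiting Theorem \ref{cographs_characterisation}: suppose $G-ab$ is not a cograph, so it contains an induced $P_4$, say on vertices $v_1,v_2,v_3,v_4$ listed in path order. The governing idea is that deleting the bridge $ab$ cannot destroy $P_4$'s, only create them, and any newly created induced $P_4$ must ``use up'' the vanished edge $ab$ in the sense that both $a$ and $b$ appear among its vertices in a non-adjacent position. So the first move is to rule out the case where this new $P_4$ is inherited from $G$.

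First I would observe that if at most one of $a,b$ lies in $\{v_1,v_2,v_3,v_4\}$, then the adjacencies among these four vertices are identical in $G$ and in $G-ab$, since the only pair whose relation differs is $\{a,b\}$, which is irrelevant unless both endpoints are present. Hence $v_1v_2v_3v_4$ would already be an induced $P_4$ of $G$. By hypothesis every $P_4$ of $G$ contains the edge $ab$, which forces both $a$ and $b$ into $\{v_1,v_2,v_3,v_4\}$, contradicting the assumption that at most one of them is there.

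Consequently both $a$ and $b$ lie among $v_1,\dots,v_4$, and since they are non-adjacent in $G-ab$ they occupy one of the three non-adjacent positions of a $P_4$: the two endpoints $\{v_1,v_4\}$, or a distance-two pair $\{v_1,v_3\}$ or $\{v_2,v_4\}$. Restoring the edge $ab$ in $G$ then closes a cycle through $ab$: joining the endpoints yields the $4$-cycle $v_1v_2v_3v_4v_1$, while joining $v_1$ to $v_3$ (respectively $v_2$ to $v_4$) yields a triangle on $\{v_1,v_2,v_3\}$ (respectively $\{v_2,v_3,v_4\}$). In every case $ab$ lies on a cycle of $G$, contradicting the hypothesis that $ab$ is contained in no cycle. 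These cases are exhaustive, so $G-ab$ has no induced $P_4$ and is therefore a cograph.

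The only real subtlety, and thus the step I would flag as the crux, is the reduction in the first move: one must correctly translate the hypothesis ``every $P_4$ of $G$ contains $ab$'' into a statement about the induced $P_4$'s of $G-ab$, recognizing that the \emph{only} dangerous ones are those whose vertex set contains both $a$ and $b$. Once that observation is in place, the acyclicity hypothesis eliminates each of the three remaining configurations immediately, and no case analysis beyond the three non-adjacent positions of a $P_4$ is required.
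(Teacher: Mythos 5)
Your proof is correct and complete: the paper omits the proof of Lemma \ref{help} as ``straightforward,'' and your case analysis---an induced $P_4$ of $G-ab$ either has at most one of $a,b$ among its vertices and hence survives as an induced $P_4$ of $G$ avoiding the edge $ab$, contradicting the hypothesis, or contains both $a$ and $b$ in one of the three non-adjacent positions, so that restoring $ab$ closes a triangle or a $4$-cycle through $ab$ in $G$---is exactly the intended straightforward argument. One cosmetic caveat: your motivating slogan that deleting $ab$ ``cannot destroy $P_4$'s, only create them'' is false as stated (under the hypothesis, deleting $ab$ destroys \emph{every} $P_4$ of $G$), but your formal steps never rely on it.
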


\begin{proposition}
\label{10_vtx_cographs}
Let $G$ be a forbidden induced subgraph for the class of edge-apex cographs. Then $G$ contains more than one $P_4$.    
\end{proposition}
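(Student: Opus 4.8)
The plan is to argue by contradiction. Suppose $G$ is a forbidden induced subgraph for the class of edge-apex cographs whose only induced $P_4$ is $a\text{-}b\text{-}c\text{-}d$ (with edges $ab$, $bc$, $cd$). Since $G$ is not a cograph it has at least one $P_4$, so it is enough to rule out exactly one. The goal throughout is to exhibit an edge $e$ with $G-e$ a cograph, contradicting that $G$ is a forbidden induced subgraph for $\Ga$; I will aim to take $e=ab$.

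First I would record two easy consequences of uniqueness. Because every $P_4$ of $G$ contains each of $a,b,c,d$, the graphs $G-a$, $G-b$, $G-c$, $G-d$ are all cographs. Next comes the key step: classifying how a vertex $x \notin \{a,b,c,d\}$ attaches to the path. As $\{a,b,c,d,x\}$ induces no $P_4$ other than $a\text{-}b\text{-}c\text{-}d$, I would check the finitely many possibilities for $N(x)\cap\{a,b,c,d\}$ and show that the only ones creating no new $P_4$ are $\emptyset$, $\{b,c\}$, and $\{a,b,c,d\}$; in every other case one exhibits an explicit second $P_4$ (for example, $x$ adjacent to $a$ alone gives $x\text{-}a\text{-}b\text{-}c$). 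Writing $U$ for the set of vertices adjacent to all of $a,b,c,d$, this shows in particular that the neighbours of $a$ are exactly $\{b\}\cup U$ and that the neighbours of $b$ outside the path are the $\{b,c\}$-vertices together with $U$. A further short argument shows that each $\{b,c\}$-vertex is adjacent to every vertex of $U$, since otherwise a $\{b,c\}$-vertex $m$ and a vertex $u\in U$ with $m\not\sim u$ would give the induced $P_4$ $a\text{-}u\text{-}c\text{-}m$.

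With this structure I would verify that $G-ab$ is a cograph by ruling out every induced $P_4$ in it. A $P_4$ avoiding $a$ would be induced in the cograph $G-a$. A $P_4$ through $a$ but not $b$ is induced identically in $G$ (the edge $ab$ is irrelevant to it), so it would be a second $P_4$ of $G$. Finally, a $P_4$ through both $a$ and $b$ is impossible: in $G-ab$ the neighbours of $a$ all lie in $U$, and each vertex of $U$ is adjacent to $b,c,d$ and to every $\{b,c\}$-vertex, so checking the three possible positions of the non-adjacent pair $\{a,b\}$ in a $P_4$ produces a chord in each case. Hence $G-ab$ has no induced $P_4$ and is a cograph, the desired contradiction. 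The main obstacle is the classification of external vertices together with this join property; these are short but must be carried out carefully, and Lemma~\ref{help} supplies an immediate finish in the degenerate situation where $U=\emptyset$, since then $a$ has degree one and $ab$ lies on no cycle.
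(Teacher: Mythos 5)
Your proof is correct, and it takes a genuinely different route from the paper's. The paper works in the complement: since the unique $P_4$ is $abcd$, the graph $G-a$ is a cograph, hence $G-a$ or $\overline{G}-a$ is disconnected; Lemma~\ref{connected} lets one assume $G$ and $\overline{G}$ are connected, a component analysis of $\overline{G}-a$ (and, by symmetry, $\overline{G}-d$) shows that $c$ and $b$ are leaves of $\overline{G}$, a shortest-path argument in $\overline{G}$ then shows $a$ is a leaf of $G$, and Lemma~\ref{help} applied to the edge $ab$ finishes. You never touch the complement or connectivity: you classify the trace $N(x)\cap\{a,b,c,d\}$ of each outside vertex, and indeed only $\emptyset$, $\{b,c\}$ and $\{a,b,c,d\}$ survive the uniqueness of $abcd$ (I checked all sixteen subsets; e.g.\ $\{a,b\}$ yields $xbcd$, $\{b,d\}$ yields $abxd$, $\{a,b,d\}$ yields $axdc$), you establish the join between the $\{b,c\}$-vertices $M$ and the path-universal vertices $U$ via the $P_4$ $a\,u\,c\,m$, and then you rule out each placement of the nonadjacent pair $\{a,b\}$ in a $P_4$ of $G-ab$. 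I verified all three positions with both orientations: since the $(G-ab)$-neighbours of $a$ lie in $U$ and those of $b$ lie in $\{c\}\cup M\cup U$, while every $U$-vertex is adjacent to $b$, $c$, $d$ and all of $M$, each case is blocked by a chord (note $U$ need not be a clique, but your cases never force two $U$-vertices into the path, since the second one would have to be nonadjacent to $a$ or $b$, which $U$-membership forbids). What each approach buys: the paper's argument is shorter because it leans on the disconnectedness characterization of cographs and the two preceding lemmas, whereas yours is self-contained and elementary, and uses neither Lemma~\ref{connected} nor the minimality of $G$ beyond the uniqueness of the $P_4$; in fact your argument proves the slightly stronger statement that every graph with exactly one induced $P_4$ $abcd$ becomes a cograph upon deleting $ab$, i.e.\ is an edge-apex cograph, with your closing appeal to Lemma~\ref{help} needed only as a remark in the degenerate case $U=\emptyset$, which your main case analysis already covers anyway.
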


\begin{proof}
Suppose not, and let $abcd$ be the unique $P_4$ contained in $G$. Then $G-a$ is a cograph so $G-a$ or $\overline{G}-a$ is disconnected. By Lemma \ref{connected}, we may assume that both $G$ and $\overline{G}$ are connected. Observe that if $G-a$ is disconnected, then $\{b,c,d\}$ is in a single component of $G-a$. Let $v$ be a neighbour of $a$ in the other component. Then $vabc$ is an induced $P_4$ in $G$, a contradiction. Therefore $\overline{G}-a$ is disconnected. Note that $bdac$ is a $P_4$ in $\overline{G}$ so $\{b,d\}$ is in a single component of $\overline{G}-a$. Let $v$ be a neighbour of $a$ distinct from $c$ in the other component. Then $vadb$ is an induced $P_4$ in $\overline{G}$ and so induces a $P_4$ in $G$ other than $abcd$. It follows that $G-a$ has exactly two components and $v=c$. Note that if $c$ has a neighbour $w$ in $\overline{G}$, then $wcad$ is an induced $P_4$ in $\overline{G}$ so induces a $P_4$ in $G$, a contradiction. It follows that $c$ is a leaf in $\overline{G}$. Since $G-d$ is a cograph, by symmetry, $b$ is a leaf in $\overline{G}$. 

Next we show that $a$ is a leaf in $G$. Suppose that $t$ is a vertex in $G$ distinct from $b$ such that $at$ is an edge in $G$. Then $at$ is not an edge in $\overline{G}$. Since $\overline{G}$ is connected, there is an $at$-path $P$ in $\overline{G}$. Pick a shortest such path. Then $|P| \geq 2$. Note that $P$ along with the edge $ac$ contains a $P_4$ so induces a $P_4$ in $G$ other than $abcd$. It follows that $a$ is a leaf in $G$. Therefore, by Lemma \ref{help}, $G-ab$ has no $P_4$, a contradiction to the minimality of $G$. 
\end{proof}

\begin{proof}[Proof of Theorem \ref{cograph_extension}]
It is clear that $V(G) \geq 5$. By Proposition \ref{10_vtx_cographs}, we may assume that we have two distinct subsets $S$ and $T$ of $V(G)$ such that both $S$ and $T$ induce a $P_4$. Suppose that $|S \cap T| = q$ is minimal. Then by Proposition \ref{main_extension}, $V(G) \leq 4 + (4-q) + 2k$ where $k = |E(G[S]) \cap E(G[T])|$. Note that if $q \leq 1$, then $k=0$ so $V(G) \leq 8$. It remains to consider the cases when $q = 2$ or $q=3$. Observe that if $q = 2$, then $k$ is either $0$ or $1$. It is straightforward to check that $V(G) \leq 8$. Finally suppose that $q=3$. Since three vertices of a $P_4$ do not induce a $K_3$, it is clear that $k$ is in $\{0,1,2\}$. Note that $|V(G)| \leq 8$ unless $k=2$. Therefore assume that $k=2$ and $|V(G)|=9$. Since $G[S]$ and $G[T]$ have three common vertices and two common edges, the non-common vertex is not internal. Therefore we may assume that the two induced $P_4$'s in $G$ are $abcd$ and $abce$. Since $G-ab$ and $G-bc$ are not cographs, there are subsets $F_1$ and $F_2$ of $V(G)$ such that $(G-ab)[F_1]$ is a $P_4$ and $(G-bc)[F_2]$ is a $P_4$. Note that $V(G) = \{a,b,c,d,e\} \cup F_1 \cup F_2$. If, for $\{i,j\}=\{1,2\}$, $G[F_i]$ is a $P_4$, then by minimality of $q$, we have $|F_i \cap S| \geq 3$. It follows that $G[F_j]$ is not a $P_4$ otherwise $|F_j \cap S| \geq 3$ and so $|V(G)| \leq 7$, a contradiction. Note that $F_j$ contains the endpoints of the edge $ab$ of $G$ or the edge $ac$ of $G$, otherwise $G[F_j]$ is a $P_4$. Therefore $|F_j \cap S| \geq 2$ so $|V(G)| \leq 8$, again a contradiction. Thus we may assume that both $G[F_1]$ and $G[F_2]$ are not $P_4$'s. Since $\{a,b\} \subseteq F_1$ and $\{b,c\} \subseteq F_2$, and $|V(G)|=9$, we have $|F_i \cap \{a,b,c,d,e\}| = 2$ for $i$ in $\{1,2\}$. Suppose that $G[K]$ is a $P_4$ for a subset $K$ of $V(G)$ such that $G[K]$ does not contain the edge $ab$. Then $V(G)=S \cup T \cup K \cup F_2$. Since $|K \cap S| \geq 3$, we have $|V(G)| \leq 8$, a contradiction. Similarly, $G[K]$ contains the edge $bc$. Therefore by the minimality of $q$, we may assume that every induced $P_4$ of $G$ contains the edges $ab$ and $bc$.

%Pick $F_1$ and $F_2$ such that for $i$ in $\{1,2\}$, $|F_i \cap \{a,b,c,d,e\}|$ is maximum. Note that $V(G) = \{a,b,c,d,e\} \cup F_1 \cup F_2$. Also, if for $\{i,j\}=\{1,2\}$, $G[F_i]$ is a $P_4$, then by minimality of $q$, we have $|F_i \cap S| \geq 3$. It follows that $G[F_j]$ is not a $P_4$ otherwise $|F_j \cap S| \geq 3$ and so $|V(G)| \leq 7$, a contradiction. Note that $F_j$ contains the endpoints of the edge $ab$ of $G$ or the edge $ac$ of $G$, otherwise $G[F_j]$ is a $P_4$. Therefore $|F_j \cap S| \geq 2$ so $|V(G)| \leq 8$, again a contradiction. Thus we may assume that both $G[F_1]$ and $G[F_2]$ are not $P_4$'s. Since $\{a,b\} \subseteq F_1$ and $\{b,c\} \subseteq F_2$, and $|V(G)|=9$, we have $|F_i \cap \{a,b,c,d,e\}| = 2$ for $i$ in $\{1,2\}$. By the minimality of $q$ and the choice of $F_i$'s, we may assume that every induced $P_4$ of $G$ contains the edges $ab$ and $bc$. Suppose that $G[K]$ is a $P_4$ for a subset $K$ of $V(G)$ such that $G[K]$ does not contain the edge $ab$. Then $V(G)=S \cup T \cup K \cup F_2$. Since $|K \cap S| \geq 3$, we have $|V(G)| \leq 8$, a contradiction. 

If $G-b$ is not a cograph, then there exists an induced $P_4$ of $G$ that does not contain the edge $ab$, a contradiction. Therefore $G-b$ is a cograph. First suppose that $G-b$ is disconnected. Then $\{c,d,e\}$ is in a single component of $G-b$. Let $v$ be a neighbour of $b$ distinct from $a$ in the other component of $G-b$. Then $vbcd$ is an induced $P_4$ of $G$ that does not contain the edge $ab$, a contradiction. Thus $v=a$ and the edge $ab$ is in no cycle of $G$. By Lemma \ref{help}, it follows that $G-ab$ has no $P_4$, a contradiction. 

Therefore $\overline{G}-b$ is disconnected. Then $\{a,c,d,e\}$ is in a single component of $\overline{G}-b$. Let $v$ be a neighbour of $b$ in the other component. Then $vbda$ is an induced $P_4$ in $\overline{G}$ so $\{a,b,d,v\}$ induces a $P_4$ in $G$ that does not contain the edge $bc$, a contradiction. Therefore $|V(G)| \leq 8$.
\end{proof}

We omit the proof of the following straightforward result. 

\begin{lemma}
\label{lemmaA}
Let $G$ be a forbidden induced subgraph for edge-apex cographs. Then no proper induced subgraph of $G$ contains two edge disjoint $P_4$'s.
\end{lemma}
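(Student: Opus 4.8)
The plan is to argue by contradiction, using the minimality built into the definition of a forbidden induced subgraph together with the elementary observation that a single edge deletion cannot simultaneously destroy two edge-disjoint induced $P_4$'s. Suppose some proper induced subgraph $H = G[X]$, with $X \subsetneq V(G)$, contains two edge-disjoint $P_4$'s, say $P$ and $P'$. Since $G$ is a forbidden induced subgraph for the class of edge-apex cographs, every proper induced subgraph of $G$, and in particular $H$, is an edge-apex cograph. I will reach a contradiction by showing that $H$ is in fact \emph{not} an edge-apex cograph.

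First, $H$ contains a $P_4$ (namely $P$), so by Theorem \ref{cographs_characterisation} it is not a cograph. It therefore suffices to show that $H-e$ is not a cograph for \emph{every} edge $e$ of $H$; this establishes that $H$ is not an edge-apex cograph. The key point is that deleting a single edge $e$ destroys an induced $P_4$ on a vertex set $W$ only when $e$ is one of the three edges of that $P_4$: the only edges among the four vertices of an induced $P_4$ are its own three edges, and removing an edge incident to a vertex outside $W$ leaves $G[W]$ unchanged.

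Now fix any edge $e$ of $H$. Because $P$ and $P'$ are edge-disjoint, $e$ can be an edge of at most one of them, so at least one of $P, P'$ survives as an induced $P_4$ in $H-e$. Hence $H-e$ still contains a $P_4$ and, again by Theorem \ref{cographs_characterisation}, is not a cograph. As $e$ was arbitrary, $H$ has no edge whose deletion yields a cograph; combined with the fact that $H$ itself is not a cograph, this shows $H$ is not an edge-apex cograph, contradicting that $H$ is a proper induced subgraph of the minimal obstruction $G$.

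I expect no serious obstacle here, since the whole argument reduces to the edge-disjointness observation. The only care needed is in verifying the two halves of that observation: that deleting an edge incident only to vertices outside a given $P_4$ preserves it as an \emph{induced} subgraph, and that an edge lying among the three edges of one of the two $P_4$'s cannot, by edge-disjointness, lie among those of the other.
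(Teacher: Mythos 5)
Your proof is correct, and it is precisely the straightforward argument the paper had in mind when it omitted the proof: a single edge deletion can destroy at most one of two edge-disjoint induced $P_4$'s, so the proper induced subgraph $H$ could not be an edge-apex cograph, contradicting the minimality of $G$. You also correctly handle the one delicate point---that the surviving $P_4$ remains \emph{induced} after the deletion, since an edge with both endpoints among its four vertices would have to be one of its three edges.
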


In the rest of the section, we consider the case when a forbidden induced subgraph $G$ for edge-apex cographs contains two vertex-disjoint $P_4$'s. We determine all such graphs.

\begin{proposition}
\label{8_vtx}
Let $G$ be a forbidden induced subgraphs for the class of edge-apex cographs such that $G$ contains two vertex-disjoint $P_4$'s. Then $G$ is isomorphic to one of the graphs in Figure \ref{cograph_8_3}.
\end{proposition}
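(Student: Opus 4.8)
The plan is to first pin down the vertex set and then reduce the whole statement to a finite, symmetry-reduced analysis of the cross edges. Let $P=abcd$ and $P'=efgh$ be two vertex-disjoint induced $P_4$'s of $G$, with edges $ab,bc,cd$ and $ef,fg,gh$ respectively. Since $P_4$ is the only forbidden induced subgraph for cographs and $P,P'$ are edge-disjoint, the argument in the first paragraph of the proof of Proposition \ref{unique_FIS} gives $V(G)=V(P)\cup V(P')$, so $|V(G)|=8$; moreover $G[\{a,b,c,d\}]$ and $G[\{e,f,g,h\}]$ are each exactly a $P_4$. Hence $G$ is completely determined by the bipartite set $B$ of cross edges between $\{a,b,c,d\}$ and $\{e,f,g,h\}$. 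The first observation is that $G$ is automatically \emph{not} an edge-apex cograph: deleting any single edge $\epsilon$ destroys at most one of the two edge-disjoint induced $P_4$'s $P,P'$ (a cross edge destroys neither, and an edge inside one part leaves the other part untouched), so $G-\epsilon$ always retains an induced $P_4$ and is not a cograph. Thus the entire content of the proposition is the minimality of $G$, that is, the requirement that $G-v$ be an edge-apex cograph for every vertex $v$.

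Next I would reformulate minimality into a usable form. If $(G-v)-\epsilon$ is a cograph, then $\epsilon$ must lie on every induced $P_4$ of $G-v$, because removing $\epsilon$ cannot eliminate a $P_4$ that avoids it. When $v\in V(P)$ the path $P'$ survives as an induced $P_4$ of $G-v$, so the certifying edge $\epsilon$ must be one of $ef,fg,gh$; symmetrically, when $v\in V(P')$ it must be one of $ab,bc,cd$. Conversely, Lemma \ref{help} lets me confirm a candidate: if some such edge lies on every induced $P_4$ of $G-v$ and on no cycle of $G-v$, then its deletion yields a cograph, so $G-v$ is an edge-apex cograph. Alongside this I would use Lemma \ref{lemmaA} as the main pruning tool: since $G$ is a forbidden induced subgraph, no $G-v$ may contain two edge-disjoint $P_4$'s, so any choice of cross edges that creates, in some $G-v$, a mixed induced $P_4$ edge-disjoint from the surviving one is immediately excluded.

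The crux is the enumeration of the admissible cross-edge sets $B$, and I would organize it modulo the symmetry group generated by the reversals $a\leftrightarrow d,\ b\leftrightarrow c$ of $P$, the reversals $e\leftrightarrow h,\ f\leftrightarrow g$ of $P'$, and the swap $P\leftrightarrow P'$. First I would bound and localize the cross edges: if the cross edges incident to a vertex of one side are too numerous or badly placed, one can build a mixed induced $P_4$ that, after deleting a suitable vertex, is edge-disjoint from a surviving $P_4$, contradicting Lemma \ref{lemmaA}; pushing this argument across all vertices confines $B$ to a short list of patterns. For each surviving pattern I would then verify minimality directly, checking for every vertex $v$ that some edge of the surviving $P_4$ meets all induced $P_4$'s of $G-v$ and lies on no cycle, so that Lemma \ref{help} applies and $G-v$ is an edge-apex cograph.

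The patterns that survive this analysis are precisely the graphs displayed in Figure \ref{cograph_8_3}, which completes the proof. I expect the main obstacle to be exactly this case analysis on $B$: cross edges can create many mixed induced $P_4$'s (paths that alternate between the two sides or that use two vertices of each), and keeping track of all of them in each deleted graph while exhibiting a common no-cycle hitting edge is delicate. The most efficient route is to apply Lemma \ref{lemmaA} as aggressively as possible at the outset, ruling out configurations by exhibiting two edge-disjoint $P_4$'s in some $G-v$, so that only a handful of candidates remain to be matched by hand against Figure \ref{cograph_8_3}.
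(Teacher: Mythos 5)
Your setup is correct and matches the paper's: $V(G)=V(P)\cup V(P')$ so $|V(G)|=8$, the graph is determined by the cross edges, $G$ itself is automatically not an edge-apex cograph since deleting one edge cannot kill both vertex-disjoint induced $P_4$'s, and the content of the proposition is vertex-minimality, to be policed by Lemma \ref{lemmaA} and confirmed via Lemma \ref{help}. But the proof stops exactly where the proposition's actual content begins. The entire classification of the admissible cross-edge sets is asserted, not performed: ``if the cross edges \ldots are too numerous or badly placed, one can build a mixed induced $P_4$ \ldots pushing this argument across all vertices confines $B$ to a short list of patterns'' and ``the patterns that survive this analysis are precisely the graphs displayed in Figure \ref{cograph_8_3}'' are promissory notes with no argument behind them. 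There are $2^{16}$ possible cross-edge sets, and nothing in the proposal shows why only three survive. The paper's proof of Proposition \ref{8_vtx} consists precisely of this analysis, structured as a case split on whether an end of $P$ is adjacent to an end of $P'$, followed by five sublemmas (e.g., if $a1\in E(G)$ then $a$ is adjacent to all of $\{1,2,3,4\}$, forcing graph (c); otherwise at least one end is a leaf, then both ends of $P$ are leaves, then $b$ and $c$ have identical neighbourhoods in $P'$, forcing graphs (a) or (b)). None of this, nor any equivalent derivation, appears in your proposal.

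You also miss a pruning tool that the paper leans on and that Lemma \ref{lemmaA} alone does not supply: since $G$ is a \emph{minimal} forbidden graph on $8$ vertices, it cannot properly contain any smaller forbidden induced subgraph for edge-apex cographs. The paper repeatedly kills configurations by exhibiting an induced $C_5+e$ on five vertices (in sublemmas \ref{sublemma1} and \ref{sublemma2}) or an induced $K_3\circ K_2$ on six vertices (in sublemma \ref{sublemma5}); these exclusions are what make the hand analysis close. Your plan, restricted to edge-disjoint-$P_4$ contradictions plus a final Lemma-\ref{help} verification, might in principle be completable (or automated over the symmetry-reduced $2^{16}$ cases), but as written it is a strategy outline with the decisive case analysis---the mathematical substance of the proposition---left as a gap.
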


\begin{proof}
It can be easily checked that the graphs in Figure \ref{cograph_8_3} are forbidden induced subgraphs for edge-apex cographs. Let $abcd$ and $1234$ be two vertex-disjoint induced $P_4$'s in $G$. First we consider the case when $a1$ is an edge of $G$. 

\begin{sublemma}
\label{sublemma1}
$a$ is adjacent to all of $\{1,2,3,4\}$.
\end{sublemma}

If $a2$ and $a3$ are non-edges of $G$, then $abcd$ and $a123$ are two induced edge-disjoint $P_4$'s in $G-4$, a contradiction to Lemma \ref{lemmaA}. Suppose that $a2$ is not an edge of $G$. It follows that $a3$ is an edge. If $a4$ is an edge then $\{a,1,2,3,4\}$ induce a $C_5 + e$, a contradiction. Therefore $a4$ is not an edge of $G$. Observe that $43a1$ is an induced $P_4$, contradicting Lemma \ref{lemmaA}. Therefore $a2$ is an edge of $G$. Then $a3$ or $a4$ is an edge of $G$, otherwise $a234$ and $abcd$ are edge-disjoint $P_4$'s, a contradiction. If $a4$ is not an edge, then $a3$ is an edge and $43a1$ is an induced $P_4$ edge-disjoint from $abcd$, a contradiction to Lemma \ref{lemmaA}. Therefore $a4$ is an edge of $G$. Note that $a3$ is also an edge else we get an induced $C_5 + e$ in $G$. Therefore $a$ is adjacent to all of $\{1,2,3,4\}$ and \ref{sublemma1} holds.

By symmetry and \ref{sublemma1}, it follows that $1$ is adjacent to all of $\{a,b,c,d\}$. Since $d1$ and $a4$ are edges of $G$, symmetry and \ref{sublemma1} implies that $d$ is adjacent to all of $\{1,2,3,4\}$ and $4$ is adjacent to all of $\{a,b,c,d\}$ in $G$.

\begin{sublemma}
\label{sublemma2}
$b$ is adjacent to both $2$ and $3$.
\end{sublemma}

Observe that $b$ is adjacent to at least one of $\{2,3\}$ else $b123$ is an induced $P_4$ in $G$, a contradiction to Lemma \ref{lemmaA}. Suppose $b3$ is not an edge. Then $b2$ is an edge and $\{b,1,2,3,4\}$ induces a $C_5 + e$ in $G$, a contradiction. Therefore $b3$ is an edge. If $b2$ is not an edge, then we again get an induced $C_5 + e$. It follows that $b$ is adjacent to both $2$ and $3$. This proves \ref{sublemma2}. 

By symmetry and \ref{sublemma2}, it follows that $c$ is adjacent to both $2$ and $3$. Therefore $G$ is isomorphic to the graph shown in Figure \ref{cograph_8_3}(c).

Now we consider the case when $a1$ is not an edge of $G$. By symmetry, we may also assume that none of $\{a4, d1, d4\}$ is an edge of $G$. 

\begin{sublemma}
\label{sublemma3}
At least one of $\{a,d,1,4\}$ is a leaf.
\end{sublemma}

Suppose not. Since $a$ is not a leaf, it is adjacent to at least one of $\{2,3\}$. If $a$ has exactly one neighbour in $\{2,3\}$, then we get two induced edge-disjoint $P_4$'s in a proper induced subgraph of $G$. This is a contradiction to Lemma \ref{lemmaA}. Therefore $a$ is adjacent to both $2$ and $3$. Similarly, as $d, 1,$ and $4$ are not leaves, all of $\{d2,d3,1b,1c,4b,4c\}$ are edges of $G$. If $c2$ is not an edge of $G$, then $a21c$ is an induced $P_4$ of $G$, a contradiction to Lemma \ref{lemmaA}. Therefore $c2$ is an edge of $G$. It now follows that $4c2a$ is an induced $P_4$ of $G$ edge-disjoint from $abcd$, a contradiction. Therefore our result holds.

By symmetry, we may assume that $a$ is a leaf. 

\begin{sublemma}
\label{sublemma4}
$d$ is a leaf.
\end{sublemma}

Suppose that $d$ is not a leaf. Then $d$ is adjacent to both $2$ and $3$ else we get two induced edge-disjoint $P_4$'s having a common vertex, a contradiction to Lemma \ref{lemmaA}. Note that if $b$ is adjacent to $3$, then $d3ba$ is an induced $P_4$ again a contradiction to Lemma \ref{lemmaA}. So $b$ is not adjacent to $3$. Similarly $b2$ is not an edge of $G$. If $b4$ is an edge, then $b432$ is an induced $P_4$ of $G$, a contradiction to Lemma \ref{lemmaA}. In the same way $b1$ is not an edge of $G$. Therefore $b$ has no neighbour in $\{1,2,3,4\}$. If $c$ has a neighbour $v$ in $\{1,2,3,4\}$, then $abcv$ is an induced $P_4$, a contradiction to Lemma \ref{lemmaA}. So $c$ has no neighbour in $\{1,2,3,4\}$. It follows that $bcd2$ is a $P_4$ edge-disjoint from $1234$ contradicting Lemma \ref{lemmaA}. Therefore \ref{sublemma4} holds.

Suppose there is no edge between $\{b,c\}$ and $\{1,2,3,4\}$. Then $G$ is isomorphic to the graph in Figure \ref{cograph_8_3}(a). So we may assume that there is an edge between $\{b,c\}$ and $\{1,2,3,4\}$. Suppose $v$ is a neighbour of $b$ in $\{1,2,3,4\}$ such that $cv$ is not an edge of $G$. Then $dcbv$ is an induced $P_4$ of $G$ contradicting Lemma \ref{lemmaA}. Therefore every neighbour of $b$ in $\{1,2,3,4\}$ is also a neighbour of $c$. For $v$ in $\{1,2,3,4\}$, if $cv$ is an edge of $G$ but $bv$ is not an edge, then $abcv$ is an induced $P_4$, a contradiction to Lemma \ref{lemmaA}. Therefore $b$ and $c$ have the same neighbours in $\{1,2,3,4\}$. Let $v$ be a neighbour of $b$ and $c$ in $\{1,2,3,4\}$. 

\begin{sublemma}
\label{sublemma5}
A neighbour $w$ of $v$ in $\{1,2,3,4\}$ is adjacent to $b$ and $c$. 
\end{sublemma}

If not, then the graph induced on $\{a,b,c,d,v,w\}$ is $K_3 \circ K_2$, a contradiction. Therefore \ref{sublemma5} holds.

By \ref{sublemma5}, it now follows that $G$ is isomorphic to the graph in Figure \ref{cograph_8_3}(b). 
\end{proof}

We implemented the following algorithm on graphs with $5,6,7,$ and $8$ vertices using SageMath~\cite{sage} and list all forbidden induced subgraphs for edge-apex cographs in Figures \ref{cograph_5}, \ref{cograph_6}, \ref{cograph_7}, and \ref{cograph_8_3}. The graphs were drawn using SageMath.

\begin{algorithm}
\renewcommand{\thealgorithm}{}
\label{pseudocode2}
\caption{Finding forbidden induced subgraphs for edge-apex cographs}

\begin{algorithmic}
\REQUIRE $n = 5,6,7$ or $8$.
\STATE Set FinalList $\leftarrow \emptyset$
\STATE Generate all graphs of order $n$ using nauty geng \cite{nauty} and store in an iterator $L$

\FOR{$g$ in $L$ such that $g$ is not a cograph}
    \STATE Set $i \leftarrow 0$, $j \leftarrow 0$

    \FOR{$e$ in $E(g)$}
        \STATE $h = g - e$
        \IF{$h$ is not a cograph}
            \STATE $i \leftarrow i+1$
        \ENDIF
    \ENDFOR

    \FOR{$v$ in $V(g)$}
        \STATE $k = g - v$
        \IF{$k$ is a cograph or $k-e$ is cograph for some edge $e$ of $k$}
            \STATE $j \leftarrow j+1$
        \ENDIF
    \ENDFOR

\IF{$i$ equals $|E(g)|$ and $j$ equals $|V(g)|$}

\STATE Add $g$ to FinalList

\ENDIF

\ENDFOR

\end{algorithmic}
\end{algorithm}

\begin{figure}[htbp]
\centering

\begin{minipage}{.17\linewidth}
  \includegraphics[scale=0.32]{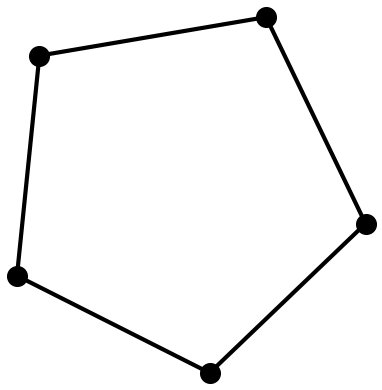}
\end{minipage} \hspace{.1\linewidth}
\begin{minipage}{.17\linewidth}
  \includegraphics[scale=0.32]{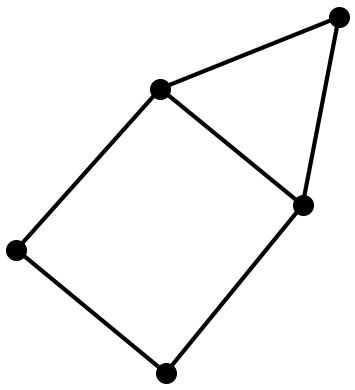}
\end{minipage} \hspace{.01\linewidth}

\caption{ The $5$-vertex forbidden induced subgraphs for edge-apex cographs.}

\label{cograph_5}

\end{figure}

\hspace{15mm}

\begin{figure}[htbp]
\centering
\begin{minipage}{.17\linewidth}
  \includegraphics[scale=0.27]{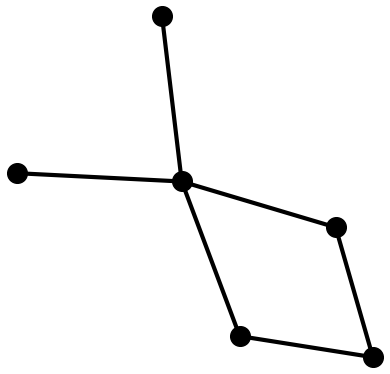}
\end{minipage}
\hspace{.05\linewidth}
\begin{minipage}{.17\linewidth}
\includegraphics[scale=0.27]{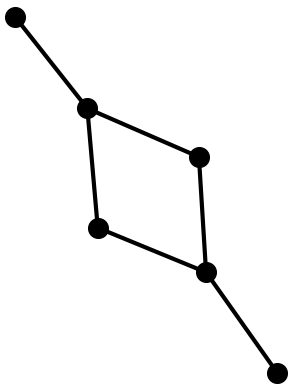}
\end{minipage}
\hspace{.05\linewidth}
\begin{minipage}{.17\linewidth}
\includegraphics[scale=0.27]{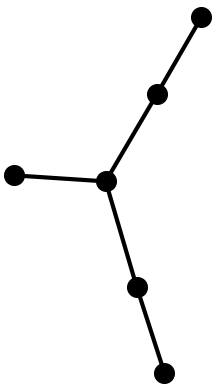}
\end{minipage}
\begin{minipage}{.17\linewidth}
  \includegraphics[scale=0.27]{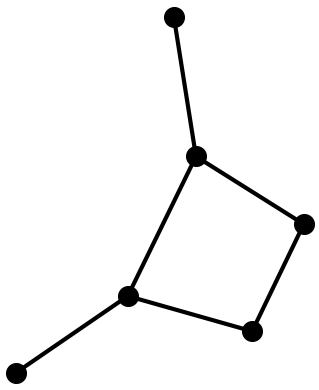}
\end{minipage}
\hspace{.05\linewidth}

\begin{minipage}{.17\linewidth}
  \includegraphics[scale=0.27]{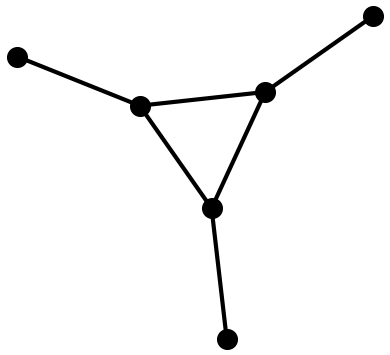}
\end{minipage} \hspace{.05\linewidth}
\begin{minipage}{.17\linewidth}
  \includegraphics[scale=0.27]{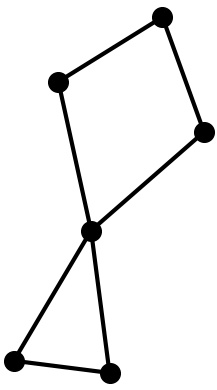}
\end{minipage}
\hspace{.05\linewidth}
\begin{minipage}{.17\linewidth}
\includegraphics[scale=0.27]{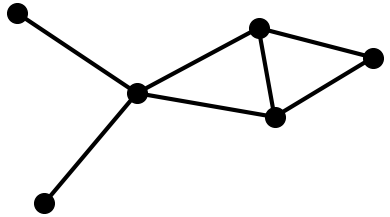}
\end{minipage}
\hspace{.05\linewidth}
\begin{minipage}{.17\linewidth}
\includegraphics[scale=0.27]{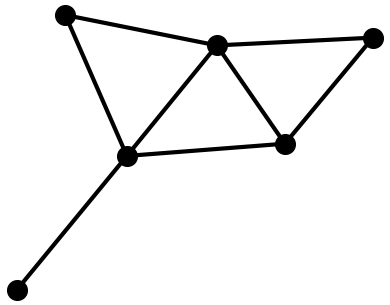}
\end{minipage}

\begin{minipage}{.17\linewidth}
  \includegraphics[scale=0.27]{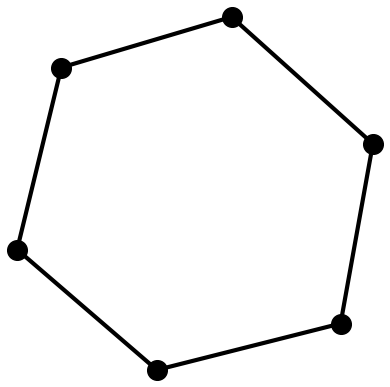}
\end{minipage}
\hspace{.07\linewidth}
\begin{minipage}{.17\linewidth}
  \includegraphics[scale=0.27]{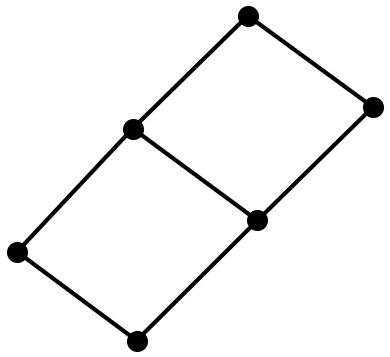}
\end{minipage} \hspace{.10\linewidth}
\begin{minipage}{.17\linewidth}
  \includegraphics[scale=0.27]{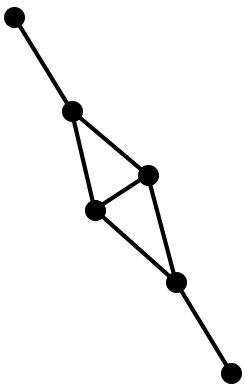}
\end{minipage} \hspace{.05\linewidth}
\begin{minipage}{.17\linewidth}
\includegraphics[scale=0.27]{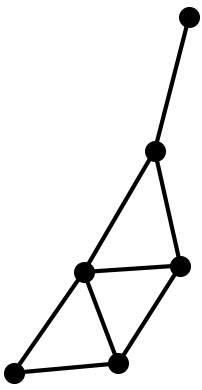}
\end{minipage}
\hspace{.05\linewidth}

\begin{minipage}{.17\linewidth}
\includegraphics[scale=0.27]{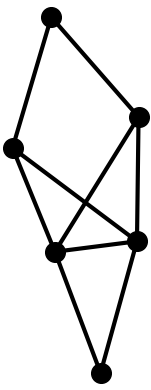}
\end{minipage}\hspace{.05\linewidth}
\begin{minipage}{.17\linewidth}
  \includegraphics[scale=0.27]{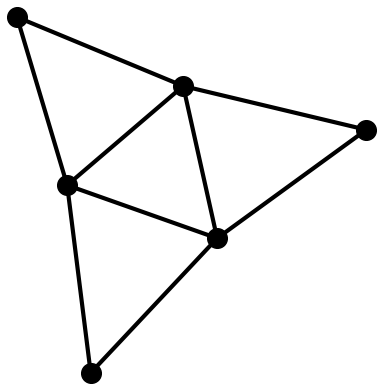}
\end{minipage}
\hspace{.05\linewidth}
\begin{minipage}{.17\linewidth}
  \includegraphics[scale=0.27]{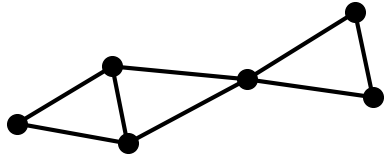}
\end{minipage} \hspace{.10\linewidth}
\begin{minipage}{.17\linewidth}
  \includegraphics[scale=0.27]{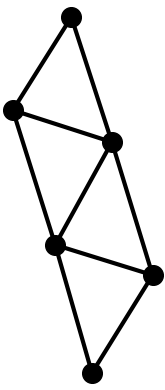}
\end{minipage}
\hspace{.05\linewidth}

\begin{minipage}{.17\linewidth}
\includegraphics[scale=0.27]{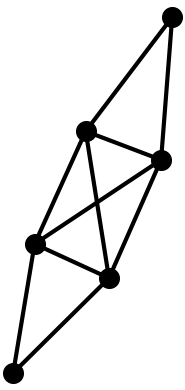}
\end{minipage}
\hspace{.05\linewidth}
\begin{minipage}{.17\linewidth}
\includegraphics[scale=0.27]{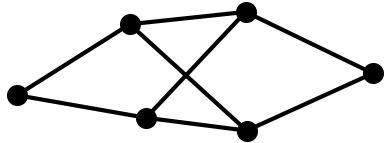}
\end{minipage}
\hspace{.01\linewidth}

\caption{The $6$-vertex forbidden induced subgraphs for edge-apex cographs.}

\label{cograph_6}

\end{figure}

\begin{figure}[htbp]
\centering

\begin{minipage}{.17\linewidth}
\includegraphics[scale=0.25]{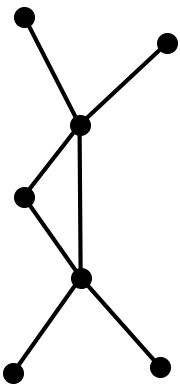}
\end{minipage}
\begin{minipage}{.17\linewidth}
  \includegraphics[scale=0.25]{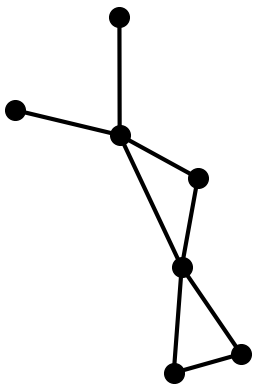}
\end{minipage}
\hspace{.01\linewidth}
\begin{minipage}{.17\linewidth}
  \includegraphics[scale=0.25]{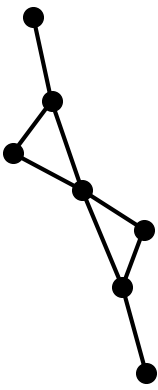}
\end{minipage} \hspace{.01\linewidth}

\begin{minipage}{.17\linewidth}
  \includegraphics[scale=0.25]{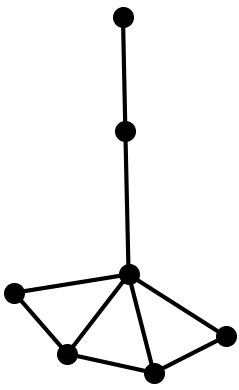}
\end{minipage}
\hspace{.01\linewidth}
\begin{minipage}{.17\linewidth}
\includegraphics[scale=0.25]{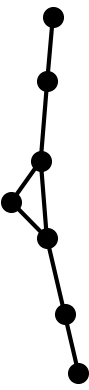}
\end{minipage}
\hspace{.01\linewidth}
\begin{minipage}{.17\linewidth}
\includegraphics[scale=0.25]{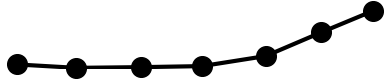}
\end{minipage}\hspace{.01\linewidth}

\begin{minipage}{.17\linewidth}
  \includegraphics[scale=0.25]{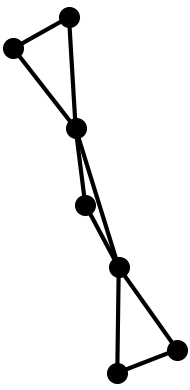}
\end{minipage}
\hspace{.01\linewidth}
\begin{minipage}{.17\linewidth}
  \includegraphics[scale=0.25]{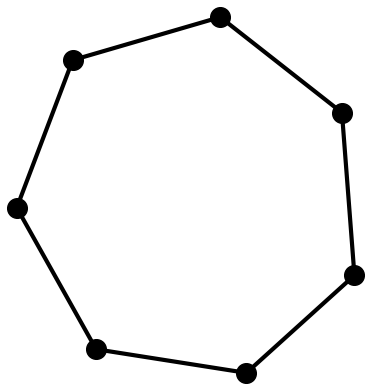}
\end{minipage} \hspace{.1\linewidth}
\begin{minipage}{.17\linewidth}
  \includegraphics[scale=0.25]{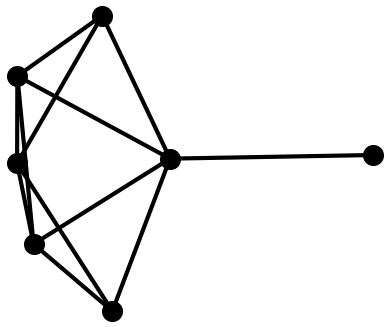}
\end{minipage}
\hspace{.01\linewidth}

\caption{The $7$-vertex forbidden induced subgraphs for edge-apex cographs.}

\label{cograph_7}

\end{figure}

\begin{figure}[htbp]
\begin{minipage}{.17\linewidth}
\includegraphics[scale=0.32]{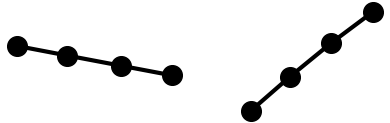} \\ (a)
\end{minipage}
\qquad\qquad\qquad
\begin{minipage}{.17\linewidth}
\includegraphics[scale=0.32]{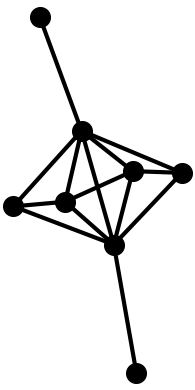} \\ (b)
\end{minipage} \hspace{.07\linewidth}
\quad
\begin{minipage}{.17\linewidth}
  \includegraphics[scale=0.32]{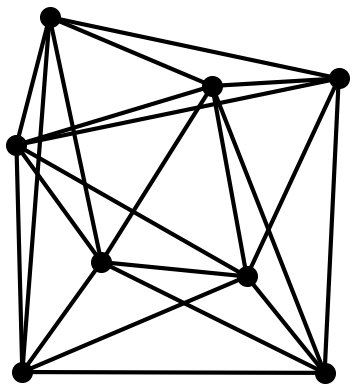} \\ (c)
\end{minipage}
\hspace{.01\linewidth}
\caption{The three $8$-vertex forbidden induced subgraphs for edge-apex cographs.}

\label{cograph_8_3}

\end{figure}

\FloatBarrier

\section*{Acknowledgement}

The authors thank Thomas Zaslavsky for helpful discussions. V. Sivaraman was partially supported by Simons Foundation Travel Support for Mathematicians (Award No. 855469).

\end{document}